\documentclass[10pt,A4paper,fleqn]{amsart}
\linespread{1.1}
\usepackage{mathrsfs}
\usepackage{amsfonts}
\usepackage{txfonts}
\usepackage{amsmath}
\usepackage{amssymb}
\usepackage{amsthm}
\usepackage[pdftex]{graphicx}
\usepackage[toc,page,title,titletoc,header]{appendix}
\usepackage{geometry}
\usepackage{upgreek}
\usepackage[T1]{fontenc}
\usepackage{color}
\usepackage{hyperref}
\usepackage[all]{xy}
\usepackage[french,english]{babel}
\usepackage{color}
\usepackage{cancel}
\usepackage{ulem}

\textheight220mm \textwidth150mm

\theoremstyle{plain}
\newtheorem{theo}{Theorem}[section]
\newtheorem*{theo*}{Theorem}
\newtheorem{prop}{Proposition}[section]

\newtheorem*{lem*}{Lemma}

\theoremstyle{definition}
\newtheorem{lem}{Lemma}[section]
\newtheorem{defi}{Definition}[section]

\theoremstyle{remark}

\newtheorem*{rem*}{Remark}
\newtheorem{rem}{Remark}[section]

\newcommand{\R}{\mathbb{R}}
\newcommand{\T}{\mathbb{T}}
\newcommand{\Z}{\mathbb{Z}}

\begin{document}

\title{A common first integral from three-body secular theory and Kepler billiards}

\author{Gabriella Pinzari$^{1}$,
Lei Zhao$^{2}$}
\address{$^{1}$ Department of Mathematics, University of Padova, Via Trieste, 63 - 35121 Padova, Italy}
\email{pinzari@math.unipd.it}

\address{$^{2}$School of Mathematical Sciences, Dalian University of Technology, Dalian 116000, China }
\email{zhao1899@dlut.edu.cn; lei.zhao@math.uni-augsburg.de}


\date\today

\abstract 
A particular first integral of the partially-averaged system in the secular theory of the three-body problem appears also as an important conserved quantity of integrable Kepler billiards. In this note we illustrate their common roots with the projective dynamics of the two-center problem. We then combine these two aspects to define a class of integrable billiard systems on surfaces of constant curvature.

\endabstract

\maketitle







\section{Introduction}

In a Euclidean space of dimension at least $2$, the Kepler problem is super-integrable: It possesses more independent first integrals for the system to be integrable in the sense of Liouville. Concretely, in addition to the total energy and the angular momentum, additional conserved quantities are provided by the components of the Laplace-Runge-Lenz vector. Dynamically, this implies that the system is properly-degenerate: All bounded, non-singular orbits are closed. Out of $d$-frequencies (say, in a proper set of Action-Angle coordinates), only $1$-frequency is non-zero. This proper-degeneracy of the Kepler problem is the main reason for many degeneracies in celestial mechanics. The perturbation theory to Kepler problems is therefore hard to investigate. Nevertheless, this also leaves the possibility of considering systems with varying Kepler orbits, in a discrete or a continuous way, without destroying the integrability, even with fixed Keplerian energy. 

The function 
\begin{equation}\label{eq: D}
D=C^{2}-2 h A_{1}
\end{equation}
serves as a conserved quantity in addition to the energy for two types of these systems recently studied, in which $C$ is the total angular momentum, $A_{1}$ is a component of the Laplace-Runge-Lenz vector, and $h$ is a parameter: 

\begin{itemize}
\item Keplerian orbits evolving under a partially-averaged Newtonian potential, in $\R^{2}$ and $\R^{3}$. This has been noticed by the first author in \cite{Pinzari1}. In this situation, up to certain normalization, $h$ denotes half of the distance between the origin and the position of the second, motionless particle. $A_{1}$ is taken as the component of the Laplace-Runge-Lenz vector in the direction of the second particle.

\item In a planar Kepler billiard system, with a line as a wall of reflection. This has been first proved by Gallavotti-Jauslin in \cite{GJ}. In this case, $h$ denotes the distance from the Kepler center to the wall, and $A_{1}$ is the vertical component of the Laplace-Runge-Lenz vector with respect to the wall of reflection.
\end{itemize}

The study in \cite{Pinzari1} is continued in \cite{Pinzari2, Pinzari3, RuzzaPinzari1} toward further applications to the study of three-body problem. An apparently different first integral for close encounters in the regularized circular restricted three-body problem has been discussed in \cite{leviC1916, cardinG2022}, respectively for the planar and spatial case.

The existence of $D$ in the study of Gallavotti-Jauslin \cite{GJ} is used to disprove an ergodic assertion in a paper of Boltzmann \cite{Boltzmann}. This has intrigued many studies ever since \cite{Felder, GR, Zhao1}. The function $D$ is shown to hold as well for Kepler billiard systems with a branch of conic section as reflection wall, provided that the Kepler center lies at a focus of the conic section, $h$ is the focus-center distance of the conic section, and $A_{1}$ the component of the Laplace-Runge-Lenz vector along the axis of foci \cite{TakeuchiZhao1, TakeuchiZhao3}. 

In particular, in the system that Gallavotti-Jauslin considered, it is helpful to put an additional massless Kepler center symmetrically with respect to the line of reflection.  This line can be considered as a degeneracy along a family of confocal hyperbola with foci at the Kepler centers. Then $h$ is again the half distance between the Kepler centers. 

A main observation of this article is that the appearance of $D$ in both of systems can be understood in terms of projective dynamics \cite{Albouy}. {See also \cite{PZ} in which we present an alternative computation with full details.}
We make several further generalizations:

\begin{itemize}
\item In \cite{Pinzari1}, the partially-averaged Newtonian potential is defined with respect to a Kepler map. We propose a coordinate-free formulation. 
\item We generalize the theorem of \cite{Pinzari1} on the sphere, and in the hyperbolic space.
\item We generalize \cite{Pinzari1} to the partially-averaged sum of a Newtonian potential and a properly placed Hooke potential, for the system defined in a Euclidean space, on a hemisphere, and in the hyperbolic space.
\item We define new integrable mechanical billiard systems based on the partially-averaged system.
\end{itemize}

This note is organized accordingly.

\section{The Kepler and two-center problems}

\subsection{Kepler and two center problems on constant curvature spaces}

We consider the two-center problem in an Euclidean space $\R^{d}$. In this system, a moving particle of unit mass $q$ moves in the space $\R^{d}$ under the Newtonian attractions of two fixed Kepler centers $Z_{1}, Z_{2} \in \R^{d}$. If the masses of the centers are $m_{1}$ and $m_{2}$ respectively, then the equations of motion is
$$\ddot{q}=-\nabla_{q} V(q),$$
with the potential 
$$V(q)= -\dfrac{m_{1}}{\|Z_{1}-q\|}-\dfrac{m_{2}}{\|Z_{2}-q\|}.$$

This system is an integrable system, already known to Euler \cite{Euler} and Lagrange \cite{Lagrange} for $d=2, 3$. Indeed by Noether's theorem, rotations around the axis of the centers provide $d-2$ first integrals commuting with the energy in $\R^{d}$. The integrability of the system follows from one additional non-trivial first integral, which has been identified by these authors. 

This type of problems can be imposed on the sphere as well. A potential system on the unit sphere $\mathbb{S}^{d}=\{x: ||x||=1\} \subset \R^{d+1}$ with potential function $V$ describing the motion of a unit mass particle $q$ has equations of motion 
$$\nabla_{\dot{q}} \dot{q}=-\hbox{grad}_{g_{0}} V,$$
in which both the covariant derivative and the gradient is defined with respect to the round metric $g_{0}$ on $\mathbb{S}^{d}$. This is a natural mechanical system, and the total energy 
$$\dfrac{1}{2} g_{q}(\dot{q}, \dot{q})+V(q)$$
is a conserved quantity. 

As noted by Graves \cite{Graves} and Serret \cite{Serret}, the natural extension of Kepler problem from the Euclidean space to the (unit) sphere which preserves the elliptic law is given by a potential proportional to the cotangent of the central angle the moving particle made with the Kepler center. The spherical Kepler potential respect to a center $Z_{1} \in \mathbb{S}^{d}$ of mass $m_{1}$ is thus given by $-m_{1} \cot{\theta_{1}}$. The angle $\theta_{1}$ is the central angle the particle $q$ made from $Z_{1}$. Note that the antipodal point of $Z_{1}$ on $ \mathbb{S}^{d}$ becomes another singularity for the potential, repulsive if $Z_{1}$ is attractive and attractive if $Z_{1}$ is repulsive, so Kepler singularities on the sphere appear in pairs.  It is known to these authors that all non-singular orbits are spherical ellipses, which presents the form of elliptic law on the sphere. Further study shows that there are even more similarities of the Euclidean and spherical Kepler problems \cite{KH, AZ}. 

As in the Euclidean case, the spherical 2-center problem describes the motion of a particle moving on $\mathbb{S}^{d}$ attracted by two pairs of antipodal centers of Kepler type. On the sphere $S^{d}$ this system is thus described by a potential of the form $-m_{1} \cot{\theta_{1}}-m_{2} \cot{\theta_{2}}$. This is an integrable system closely related to the Euclidean two-center problem \cite{Albouy}, and will be important for this note.

We may define Kepler problem and the two-center problems in the hyperbolic spaces in an analogous manner. They share many similarities with their spherical and Euclidean analogues. We note that the two-center problem in the hyperbolic 3-space has been first studied by Killing \cite{Killing}.

\subsection{Projective Dynamics of the Two-Center/Lagrange Problems}
In \cite{Albouy}, Albouy gave an explanation that the additional non-trivial first integral of the two center problem in $\R^{d}$ is ultimately related to the energy of a \emph{corresponding system} defined on the sphere, which is the two-center problem on a hemisphere of $\mathbb{S}^{d}$. It is shown that the natural central projection from the center of $\mathbb{S}^{d}$ projects unparametrized orbits of one system to unparametrized orbits of the other and thus both energies of these two systems give rise to a pair of independent, commuting first integrals for both systems.

Here we recall the construction. We consider $\mathbb{S}^{d}$ embedded as unit sphere in $\R^{d+1}$, with $\R^{d}$ embedded as the hyperplane $\R^{d} \times \{-1\} \subset \R^{d+1}$. The lower hemisphere $\mathbb{S}^{d}_{SH}:=\{(x_{1}, \cdots, x_{d+1}) \in \mathbb{S}^{d}, x_{d+1}<0\}$ of $\mathbb{S}^{d}$ is thus projected to $\R^{d}$ via central projection from the origin in $\R^{d+1}$. If the centers of the two center problem are put symmetrically with respect to the contact point of $\mathbb{S}^{d}$ with $\R^{d} \times \{-1\}$, then according to  \cite{Albouy}, this projection also project unparametrized orbits of this spherical problem to unparametrized orbits of the two-center problem in $\R^{d}$, defined with respect to a modified metric $\|\cdot\|_{a}$ given below. The spherical energy thus induces an additional first integral for the two-center problem in $\R^{d}$ and vice versa, the energy of the Euclidean two-center problem is an additional first integral for the spherical two-center problem. Moreover, we may as well put a harmonic potential in the middle of the two Kepler centers and all these hold for the modified system, called the Lagrange problem \cite{Lagrange} as well. This can as well be defined on $\mathbb{S}^{d}_{SH}$, with a potential proportional to the squared tangent of the central angle as the spherical harmonic potential.

We denote the masses of the two Kepler centers and the string constant of the Hooke center in $\R^{d}$ as $m_{1}, m_{2}, f \in \R$ respectively. We do not make assumptions on signs of them. We put the centers in $\R^{d}$ at $(0, \cdots, 0, \pm a)$. Notice that in $\R^{d}$, the mechanical systems are defined with respect to the non-standard affine norm $\|(v_{1},\cdots, v_{d})\|_{a}:=\sqrt{v_{1}^{2}+\cdots + v_{d-1}^{2}+\frac{v_{d}^{2}}{1+a^{2}}}$.

We recall the following proposition from \cite{Albouy, TakeuchiZhao1}:

\begin{prop}\label{prop: 2.1} The Lagrange problem in $\R^{d}$ with the metric from the norm $\|\cdot\|_{d}$ and with parameters $(m_{1}, m_{2}, f)$ and on $\mathbb{S}^{d}_{SH}$ with the round metric inherited from $\R^{d+1}$ and with parameters $(m_{1} \sqrt{1+a^{2}}, m_{2} \sqrt{1+a^{2}}, f)$  are in projective correspondence. 
\end{prop}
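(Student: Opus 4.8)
The plan is to reduce the statement to the projective correspondence for the pure two-center problem, which we take from \cite{Albouy, TakeuchiZhao1}, plus an identification of parameters carried out by a short computation. Recall the two ingredients. First, the Jacobi--Maupertuis principle: the unparametrized orbits of a natural mechanical system with metric $g$ and potential $V$ on the energy level $\{E\}$ are the (reparametrized) geodesics of the metric $(E-V)g$. Second, Beltrami's theorem: the central projection $\pi$ from $\mathbb{S}^{d}_{SH}$ to $\R^{d}\times\{-1\}\cong\R^{d}$ carries great circles to straight lines, hence carries the geodesics of the round metric $g_{0}$ to those of the flat metric $g_{a}$ associated with $\|\cdot\|_{a}$; so $\pi^{*}g_{0}$ and $g_{a}$ are projectively equivalent on $\R^{d}$, and, as in \cite{Albouy}, $\pi$ matches the two families of Jacobi metrics (over all energies), with the spherical energy transplanted to a non-trivial first integral of the Euclidean system and vice versa. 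Granting this, the proposition is proved once we check that the potentials correspond precisely when the parameters are rescaled as stated.

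For the computation, write a point $q=(q_{1},\dots,q_{d})\in\R^{d}$ in homogeneous coordinates $\hat{q}=(q,-1)\in\R^{d+1}$, so that the point of $\mathbb{S}^{d}_{SH}$ over $q$ is $\hat{q}/|\hat{q}|$, with $|\hat{q}|^{2}=\Lambda:=1+\|q\|^{2}$ ($\|\cdot\|$ Euclidean). A Kepler center $Z\in\{(0,\dots,0,\pm a)\}$ has $\hat{Z}=(Z,-1)$, $|\hat{Z}|=\sqrt{1+a^{2}}$, and the central angle $\theta$ of $\hat{q}/|\hat{q}|$ from $\hat{Z}/|\hat{Z}|$ satisfies $\cos\theta=\langle\hat{q},\hat{Z}\rangle/\big(|\hat{q}|\,|\hat{Z}|\big)$. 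Expanding $1-\cos^{2}\theta$ one gets the key identities
\begin{equation*}
\sin\theta=\frac{\|q-Z\|_{a}}{\sqrt{\Lambda}},\qquad \cot\theta=\frac{\langle\hat{q},\hat{Z}\rangle}{\sqrt{1+a^{2}}\;\|q-Z\|_{a}},\qquad \pi^{*}g_{0}=\frac{1}{\Lambda}\Big(\langle dq,dq\rangle-\frac{\langle q,dq\rangle^{2}}{\Lambda}\Big),
\end{equation*}
and $\det(\pi^{*}g_{0})=\Lambda^{-(d+1)}$, which exhibits $\Lambda$ as the Beltrami factor between $\pi^{*}g_{0}$ and a flat metric. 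Likewise, the central angle $\rho$ from the contact point $(0,\dots,0,-1)$ --- where the Hooke center sits on the sphere --- satisfies $\cos\rho=\Lambda^{-1/2}$, hence $\tan^{2}\rho=\|q\|^{2}$.

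Now substitute these into the spherical Jacobi metric $(E^{\mathrm{sph}}-V^{\mathrm{sph}})\,g_{0}$, with $V^{\mathrm{sph}}=-M_{1}\cot\theta_{1}-M_{2}\cot\theta_{2}+\tfrac{f}{2}\tan^{2}\rho$, and compare with the Euclidean Jacobi metric $(E^{\mathrm{eucl}}-V^{\mathrm{eucl}})\,g_{a}$, with $V^{\mathrm{eucl}}=-\tfrac{m_{1}}{\|q-Z_{1}\|_{a}}-\tfrac{m_{2}}{\|q-Z_{2}\|_{a}}+\tfrac{f}{2}\|q\|^{2}$. The Newtonian building blocks match up to the reparametrization factor exactly when $M_{i}=m_{i}\sqrt{1+a^{2}}$ --- the factor $\sqrt{1+a^{2}}=|\hat{Z}_{i}|$ coming from $\cos\theta_{i}$, while the $\|q-Z_{i}\|_{a}$ comes from $\sin\theta_{i}$ --- whereas the harmonic building block is already $\tfrac f2\|q\|^{2}=\tfrac f2\tan^{2}\rho\circ\pi$ with no rescaling, because the Hooke center is the contact point, for which $|\widehat{(0,\dots,0)}|=1$. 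This is exactly the dictionary $(m_{1},m_{2},f)\mapsto(m_{1}\sqrt{1+a^{2}},m_{2}\sqrt{1+a^{2}},f)$ of the statement; that $E^{\mathrm{sph}}$ and $E^{\mathrm{eucl}}$ are \emph{not} equal, but are intertwined through the Beltrami factor, is the source of the common first integral studied in the following sections.

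The one genuinely technical point is the step imported from \cite{Albouy, TakeuchiZhao1}: the pointwise-varying quantities --- the numerator $\langle\hat{q},\hat{Z}_{i}\rangle=1\pm a q_{d}$ in $\cot\theta_{i}$, the cross term $\langle q,dq\rangle^{2}$ in $\pi^{*}g_{0}$, and the powers of $\Lambda$ --- do not match term by term, and it is only after a well-chosen time reparametrization (equivalently, after passing to the energy-to-first-integral correspondence) that the two orbit families coincide. I expect no new difficulty from the Hooke term: since $\tfrac f2\tan^{2}\rho$ is precisely the projective transplant of $\tfrac f2\|q\|^{2}$ by the same $\pi$, adding it on both sides is compatible with the correspondence, and the parameter identification is then forced by the identities above.
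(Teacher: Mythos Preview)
The paper does not actually prove Proposition~\ref{prop: 2.1}: it is announced with ``We recall the following proposition from \cite{Albouy, TakeuchiZhao1}'' and then simply stated, with the reader referred to those sources (and to \cite{PZ} for the explicit computation). Your proposal is therefore not competing with any argument in the paper; it is supplying the very argument that the paper outsources. Your sketch follows precisely the projective-dynamics route of \cite{Albouy, TakeuchiZhao1}---Beltrami plus Jacobi--Maupertuis, with the explicit trigonometric identities for $\cot\theta_{i}$ and $\tan^{2}\rho$ under central projection---and your parameter identification $M_{i}=m_{i}\sqrt{1+a^{2}}$, $f\mapsto f$ is correct and matches the statement. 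Since you correctly flag the one genuinely non-formal step (the reparametrization that turns the Beltrami-factor mismatch into the energy-to-first-integral correspondence) and defer it to the same references the paper defers to, there is no gap.

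One small caution on conventions: in the paper the Euclidean Hooke potential is taken with respect to the modified norm, i.e.\ $f\,\|q\|_{a}^{2}$ (see Section~3), whereas your formula $\tan^{2}\rho=\|q\|^{2}$ uses the standard Euclidean norm. Both are quadratic forms centred at the origin and are compatible with the projective correspondence, but if you write out the matching in full you should be consistent about which quadratic form defines the planar Hooke term, or else the constant in front of $f$ may pick up a stray factor.
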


These systems, as well as those defined on $\mathbb{S}^{d}_{SH}$ are thus integrable. Similar result holds between systems defined in $\mathbb{S}^{d}$ and in $\mathbb{H}^{d}$ as well. We refer to \cite{TakeuchiZhao1} for precise statements. Note that setting $f=0$ in Prop \ref{prop: 2.1} we obtain the relevant result in the two-center problem.

For the problem in $\R^{d}$, we denote the energy of the problem as $E_{Eucl}$. We consider $\R^{d}$ as a chart for the lower hemi-sphere $\mathbb{S}_{SH}$ and we denote by $E_{sph}$ the spherical energy in terms of positions and velocities, so as a function on the tangent bundle $T \R^{d}$. Note that Legendre transformation allows to identify the tangent and cotangent bundles, which then equips the tangent bundle with an induced symplectic form and an Poisson bracket.

\subsection{Relationship among first integrals for the two center problem}
{We consider the Euclidean two center problem and argue in the case $d=2$. 
Due to the rotation invariance the formulas hold for any $d \ge 2$ as well. }
{We follow the argument of \cite{Albouy} in this section. As mentioned in the introduction, we refer to \cite{PZ} for an alternative report with full computational details.}


We put the Kepler centers at $(0, a)$ and $(0, -a)$ in $\R^{2}$, with masses $m_{1}$ and $m_{2}$ respectively. We equip $\R^{2}$ with the norm $\|\cdot\|_{a}$, defined as  $\|(u, v)\|_{a}=\sqrt{u^{2}+\frac{v^{2}}{1+a^{2}}}$.

This is the same setting as in \cite{Zhao1}, in which only one effective Kepler center is considered. By Prop \ref{prop: 2.1} we may as well add the second Kepler center center to obtain the two-center problem. The energy of the Euclidean two-center problem is
\begin{equation}\label{eq: Euclidean energy 2CP}
E_{Eucl}=\dfrac{1}{2} \Bigl(\dot{x}^{2}+\dfrac{\dot{y}^{2}}{1+a^{2}}\Bigr)-\dfrac{m_{1}}{\sqrt{x^{2}+\frac{(y-a)^{2}}{1+a^{2}}}}-\dfrac{m_{2}}{\sqrt{x^{2}+\frac{(y+a)^{2}}{1+a^{2}}}},
\end{equation}
the energy of the corresponding (hemi-)spherical 2-center problem can be written, with the same kind of computation as in \cite{Zhao1}, as
\begin{equation}\label{eq: spherical energy}
E_{sph}=\dfrac{1}{2} \Bigl(\dot{x}^{2}+\dot{y}^{2})+\dfrac{1}{2} (x \dot{y}-\dot{x} y)^{2}-\dfrac{m_{1}\sqrt{1+a^{2}} (a y+1)}{\sqrt{(y-a)^{2}+(1+a^{2}) x^{2}}} -\dfrac{m_{2}\sqrt{1+a^{2}} (-a y+1)}{\sqrt{(y+a)^{2}+(1+a^{2}) x^{2}}}.
\end{equation}
We use the affine coordinate change $(\xi, \eta) \mapsto (x=\xi, y=\sqrt{1+a^{2}} \eta + a)$ to bring the norm on $\R^{2}$ into the standard Euclidean one.
{For a quick comparison with \cite{PZ} we remark that that such change transforms the unit sphere into an ellipsoid. 
}
 With this change we have put the second center at $(0, -2 a/\sqrt{1+a^{2}})$ in this new coordinates. And we have

$$E_{Eucl}=\dfrac{1}{2} \Bigl(\dot{\xi}^{2}+\dot{\eta}^{2}\Bigr)-\dfrac{m_{1}}{\sqrt{\xi^{2}+\eta^{2}}}-\dfrac{m_{2}}{\sqrt{\xi^{2}+(\eta+2 a/\sqrt{1+a^{2}})^{2}}},$$

$$
E_{sph}=(1+a^{2}) \Bigl(\dfrac{1}{2}  (\dot{\xi}^{2}+\dot{\eta}^{2}) -\dfrac{m_{1}}{\sqrt{\xi^{2}+\eta^{2}}} \Bigr) + \dfrac{(1+a^{2})}{2} \Bigl((\xi \dot{\eta} - \eta \dot{\xi})^{2}-\dfrac{2 a}{\sqrt{1+a^{2}}}((\xi \dot{\eta} - \eta \dot{\xi}) \dot{\xi}+\dfrac{m_{1} \eta}{\sqrt{\xi^{2}+\eta^{2}}})\Bigr) -\dfrac{m_{2} (-a \sqrt{1+a^{2}} \eta+1-a^{2})}{\sqrt{(\eta + 2a/\sqrt{1+a^{2}})^{2}+\xi^{2}}}.
$$

Further set $E_{Kep}:=E_{Eucl}|_{m_{2}=0}$ the Keplerian energy with respect to the first center.  Then we can write 
$$E_{Eucl}=E_{Kep}+V_{Eucl, 2}:=E_{Kep} -\dfrac{m_{2}}{\sqrt{\xi^{2}+(\eta+2 a/\sqrt{1+a^{2}})^{2}}}.$$

From this it is clear that

\begin{equation}\label{eq: planar spherical energies}
E_{sph}=(1+a^{2})(E_{Kep}+V_{Eucl, 2}+(D+K)/2).
\end{equation}

in which 
$$D=(\xi \dot{\eta} - \eta \dot{\xi})^{2}-\dfrac{2 a}{\sqrt{1+a^{2}}}\Bigl((\xi \dot{\eta} - \eta \dot{\xi}) \dot{\xi}+\dfrac{m_{1} \eta}{\sqrt{\xi^{2}+\eta^{2}}}\Bigr)$$
as appeared in \eqref{eq: D} is a first integral of $E_{Kep}$: This is the first integral whose role we wish to address in this note. The term $V_{Eucl, 2}$ is linear in $m_{2}$, as well as $K$, that we observe from its explicit expression
$$K:= -\dfrac{2}{1+a^{2}}\dfrac{m_{2} (a \sqrt{1+a^{2}} \eta+2 a^{2})}{\sqrt{(\eta + 2a/\sqrt{1+a^{2}})^{2}+\xi^{2}}}.$$







The spherical energy $E_{sph}$ is conserved along orbits of the Euclidean two-center problem. Remind that we have used the Legendre transformation to identify the tangent bundle and the cotangent bundle. We may thus use the Legendre transformation to transport the Poisson bracket from the cotangent bundle to the tangent bundle for our convenience. This bracket is denoted still by $\{\cdot, \cdot\}$. We thus have

\begin{equation}\label{eq: commutativity of energies}
\{E_{Eucl}, E_{sph}\}=0.
\end{equation}

Thus we get

\begin{equation}\label{eq: Poisson 1}
\{E_{Kep}+V_{Eucl, 2}, D+K\}=0.
\end{equation}

\subsection{Averaging in the two center problem}

Remind that we have identified the tangent and cotangent bundles by means of Legendre transformation. We continue to work on the tangent bundle, which is equipped with a symplectic form and a Poisson-bracket induced from the cotangent bundle.

We set $m_{1}>0$ to ensure the existence of non-singular negative energy orbits of $E_{Kep}$. These orbits are all closed.  We argue on the corresponding region of the tangent bundle, on which $E_{Kep}$ defines a Hamiltonian circle action and that these orbits in $\R^{d}$ avoid the second Kepler center. Denote the set of orbits of $E_{Kep}$ in $\T \R^{d}$ whose projection passes through the second Kepler center by $\mathcal{I}$. We shall exclude this set as $V_{Eucl, 2}$ encounters singularities along these orbits. Then the region $\mathcal{R}$ is defined by having negative Keplerian energy $E_{Kep}<0$ and non-zero Kepler angular momentum $C_{Kep} \neq 0$ to exclude collisional orbits in $T \R^{d} \setminus \mathcal{I}$:

$$\mathcal{R}:=\{E_{Kep}<0, C_{Kep} \neq 0\} \subset T \R^{d} \setminus \mathcal{I}.$$

\begin{rem} The latter condition excluding collision orbits can likely be dropped since averaging is well-defined up to collisional orbits by mean of regularization. In dimension 2 and 3 this can be justified using methods and coordinates explained in \cite{Fejoz} and \cite{ZhaoKS} respectively. This is likely the case for all dimensions but we shall not deal with this issue here. 
\end{rem}


\begin{defi}\label{defi: Euclidean average} The partially-averaged system is the system defined on $\mathcal{R}$ by the Hamiltonian 
$$E_{Eucl, a}=E_{Kep}+\int_{S^{1}} V_{Eucl, 2} \, d s, $$
in which the integration is understood as averaging over closed orbits of $E_{Kep}$, parametrized by the circle $S^{1}=\R/\Z$.
\end{defi}

The definition does not require the choice of a set of canonical coordinates. When $d=2, 3$, several sets of canonical coordinates are available, including the classical Delaunay variables and Poincar\'e coordinates. Moser constructed action-angle coordinates of Delaunay type for $\R^{d}$ \cite{MoserZehnder}. In general it is not always easy to construct these coordinates, which is even more complicated in the case when more bodies were involved \cite{PinzariCoordinates}.  Moreover, all these coordinates become singular at certain types of orbits. 

Nevertheless, in a local trivialization of this circle bundle, we can have an (abstract) set of canonical coordinates $(L, \tilde{\ell}, u_{1}, \cdots, u_{d-1}, v_{1}, \cdots v_{d-1})$ after a further localization on the space of closed Keplerian orbits, in which $L$ is the circular angular momentum as in the Delaunay coordinates, and $\tilde{\ell}$ is an angle parametrizing the Keplerian orbits in this trivialization (shifted from the mean anomaly by a phase, when both are well-defined), and $(u_{i}, v_{i})$ are secular variables, in the sense that they are functions of the orbits only, independent of the actual position and velocity of the particle on the orbit. In \cite{Zhao2} one finds detailed discussion on these type of abstract local coordinates.

\begin{lem}\label{lem: 2.1} Near any closed Keplerian orbit of $E_{Kep}$ in $\mathcal{R}$ there exists a set of canonical coordinates $(L, \tilde{\ell}, u_{1}, \cdots, u_{d-1}, v_{1}, \cdots v_{d-1})$ such that $\tilde{\ell}$ is an angle parametrizing the closed Keplerian orbits, $L$ is its conjugate action variable on which $E_{Kep}$ only depends, and $(u_{1}, \cdots, u_{d-1}, v_{1}, \cdots v_{d-1})$ are secular coordinates which depends only on the Keplerian orbits. 
\end{lem}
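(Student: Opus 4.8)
The plan is to identify Lemma~\ref{lem: 2.1} with the local normal form for a free Hamiltonian circle action; this is the viewpoint developed in \cite{Zhao2}. First I would replace $E_{Kep}$ by a function generating an honest circle action. On $\mathcal{R}$, where $E_{Kep}<0$ and $C_{Kep}\neq 0$, every orbit of $E_{Kep}$ is a non-collisional bounded Keplerian ellipse, an embedded circle traversed exactly once in the Keplerian period $T$, which by Kepler's third law depends smoothly and strictly monotonically on the energy value $e=E_{Kep}$ alone (in suitable units $T\propto(-e)^{-3/2}$). Choosing $L=L(e)$ with $dL/de=T(e)/(2\pi)$ — so that $L$ is, up to an additive constant, the classical circular action $\propto(-e)^{-1/2}$, smooth and monotone on $\{e<0\}$ — the vector field $X_L=\tfrac{T(e)}{2\pi}X_{E_{Kep}}$ has $2\pi$-periodic flow. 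This flow has no fixed points and no nontrivial isotropy on $\mathcal{R}$ (a Kepler particle is never at rest, and the ellipse is covered once per period), so $L$ is the moment map of a \emph{free} Hamiltonian $S^{1}$-action on $\mathcal{R}$ whose orbits are precisely the closed Keplerian orbits; and $E_{Kep}$, being a monotone function of $e$ hence of $L$, depends on $L$ only.

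Next I would build the adapted Darboux chart around the given orbit $\gamma_{0}$. By the slice theorem for the free action, an $S^{1}$-invariant tubular neighborhood of $\gamma_{0}$ is equivariantly diffeomorphic to $S^{1}$ times a transverse disc; its $S^{1}$-coordinate gives a smooth function $\tilde\ell$ with $X_{L}(\tilde\ell)\equiv 1$, i.e.\ $\{L,\tilde\ell\}=1$, increasing by $2\pi$ along each Keplerian orbit — an angle parametrizing the closed orbits (differing from a mean anomaly by an orbit-dependent phase wherever the latter is defined). Since $\{L,\tilde\ell\}$ is a non-zero constant, the common level set $\Sigma:=\{L=L(\gamma_{0}),\ \tilde\ell=\tilde\ell(\gamma_{0})\}$ is a symplectic submanifold of dimension $2(d-1)$, a local model for the Marsden--Weinstein reduction of $\mathcal{R}$ at that value of $L$; applying the Darboux theorem on $\Sigma$ produces coordinates $(u_{1},\dots,u_{d-1},v_{1},\dots,v_{d-1})$ with $\omega|_{\Sigma}=\sum_{i}du_{i}\wedge dv_{i}$.

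Finally I would propagate $(u_{i},v_{i})$ off $\Sigma$ by demanding $\{L,u_{i}\}=\{L,v_{i}\}=\{\tilde\ell,u_{i}\}=\{\tilde\ell,v_{i}\}=0$, which is uniquely solved by transporting the values on $\Sigma$ along the commuting Hamiltonian flows of $L$ and $\tilde\ell$ (shrinking the neighborhood if needed). Then $(L,\tilde\ell,u_{1},\dots,u_{d-1},v_{1},\dots,v_{d-1})$ is a canonical coordinate system near $\gamma_{0}$ with $E_{Kep}=E_{Kep}(L)$, and each $u_{i},v_{i}$, Poisson-commuting with $L$, is $S^{1}$-invariant, hence descends to a function on the space of closed Keplerian orbits and is secular in the required sense. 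I expect the only delicate point to be the passage from a Darboux normal form \emph{at a point} to one on an $S^{1}$-invariant neighborhood of the \emph{whole} orbit $\gamma_{0}$, so that $\tilde\ell$ closes up as a genuine angle; this is precisely where freeness of the action and the equivariant slice theorem are used, while the remaining steps are the routine construction of Darboux coordinates adapted to a function with non-vanishing differential.
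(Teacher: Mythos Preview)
Your proof is correct and follows essentially the same route as the paper's: interpret $L$ as the moment map of the free Hamiltonian $S^{1}$-action generated by the Kepler flow, obtain the angle $\tilde{\ell}$ from a local trivialization of the resulting circle bundle, and then apply Darboux on the reduced space (equivalently, on your symplectic transversal $\Sigma$) to produce the secular coordinates. The only cosmetic difference is in the last step: the paper lifts the reduced Darboux coordinates by asserting they enter as $u_{i}=f_{i}(L)\tilde{u}_{i}$, $v_{i}=g_{i}(L)\tilde{v}_{i}$ for suitable functions of $L$, whereas you extend them off $\Sigma$ by propagating along the commuting Hamiltonian flows of $L$ and $\tilde{\ell}$---a cleaner and more explicit realization of the same construction.
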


\begin{proof} In a local trivialization we have the angle $\tilde{\ell}$ and its conjugate action $L$ (as a moment map of the Hamiltonian circle action). If we symplectically reduce a local trivialization of this circle bundle with respect to the Hamiltonian circle action, then by Darboux theorem there is a set of symplectic coordinates $(\tilde{u}_{1}, \cdots, \tilde{u}_{d-1}, \tilde{v}_{1}, \cdots \tilde{v}_{d-1})$. Then we recover that $(L, \tilde{\ell}, u_{1}, \cdots, u_{d-1}, v_{1}, \cdots v_{d-1})$ is a set of canonical coordinates, in which $u_{i}=f_{i} (L)\tilde{u}_{i}, v_{i}=g_{i} (L) \tilde{v}_{i}$ for some non-zero functions $f_{i}, g_{i}$ of $L$. Indeed these functions cannot depend on $\tilde{\ell}$ as a consequence of the unicity of the moment map of a Hamiltonian circle action up to a constant. It is possible to say more about the functions $f_{i}, g_{i}$ but we shall not need to do this for our current purpose. 
\end{proof}

It is not hard to see that Definition \ref{defi: Euclidean average} is independent of the choice of local coordinates. Thus averaging is well-defined in a non-trivial principal $S^{1}$-bundle, in our case defined by motions along closed Keplerian orbits. 

\begin{lem}\label{lem: 2.2} The expression $\{G_{1}(L), G_{2} (L, \tilde{\ell}, u_{1}, \cdots, u_{d-1}, v_{1}, \cdots v_{d-1}) \}$ has zero-average over a period of $\tilde{\ell}$.
\end{lem}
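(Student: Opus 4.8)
The plan is to exploit the explicit structure of the canonical coordinates furnished by Lemma \ref{lem: 2.1}. In these coordinates the bracket is the standard canonical one, with $\tilde{\ell}$ conjugate to $L$ and each $v_i$ conjugate to $u_i$, so that for any two smooth functions $F$, $G$ on the local chart one has (up to the usual choice of sign convention)
$$\{F, G\} = \frac{\partial F}{\partial \tilde{\ell}}\frac{\partial G}{\partial L} - \frac{\partial F}{\partial L}\frac{\partial G}{\partial \tilde{\ell}} + \sum_{i=1}^{d-1}\left(\frac{\partial F}{\partial v_i}\frac{\partial G}{\partial u_i} - \frac{\partial F}{\partial u_i}\frac{\partial G}{\partial v_i}\right).$$
First I would substitute $F = G_1(L)$. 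Since $G_1$ depends on $L$ alone, every partial derivative of $G_1$ other than $\partial G_1/\partial L = G_1'(L)$ vanishes, and the bracket collapses to a single term,
$$\{G_1(L), G_2\} = -\,G_1'(L)\,\frac{\partial G_2}{\partial \tilde{\ell}}.$$

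Next I would average over a full period of the angle $\tilde{\ell}$. The factor $G_1'(L)$ does not depend on $\tilde{\ell}$, so it may be pulled out of the average, leaving the average of $\partial G_2/\partial \tilde{\ell}$ over one period of $\tilde{\ell}$. Because $\tilde{\ell}$ is an angle parametrizing the closed Keplerian orbits, $G_2$ restricted to an orbit is a genuine single-valued smooth periodic function of $\tilde{\ell}$, so by the fundamental theorem of calculus the integral of $\partial G_2 / \partial \tilde{\ell}$ over a period equals the total variation of $G_2$ over that period, namely zero. Hence $\{G_1(L), G_2\}$ has zero average over a period of $\tilde{\ell}$, as claimed.

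There is essentially no serious obstacle here beyond bookkeeping; the only point that deserves a remark is that the identity $\{G_1(L), G_2\} = -G_1'(L)\,\partial_{\tilde{\ell}} G_2$ and the subsequent averaging take place within a single local trivialization of the circle bundle, and one should observe — consistently with the discussion following Lemma \ref{lem: 2.1} — that the conclusion is the vanishing of an intrinsically defined quantity (the average of a Poisson bracket over closed Keplerian orbits), hence independent of the chosen trivialization.
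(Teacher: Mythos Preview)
Your proof is correct and is essentially the same as the paper's: compute the bracket in the canonical coordinates of Lemma~\ref{lem: 2.1}, observe it reduces to $G_1'(L)\,\partial_{\tilde{\ell}} G_2$ (up to sign convention), and integrate this total derivative over a period of $\tilde{\ell}$ to get zero. Your added remark about independence of the trivialization is not in the paper's proof but is a reasonable aside.
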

\begin{proof} In coordinates, 
$$\{G_{1}(L), G_{2} (L, \tilde{\ell}, u_{1}, \cdots, u_{d-1}, v_{1}, \cdots v_{d-1}) \}=G_{1}'(L)\, \dfrac{d}{d \tilde{\ell}} (G_{2}).$$
Integration of a total derivative over a period of $\tilde{\ell}$ is zero, and thus the conclusion.
\end{proof}

The following proposition is a coordinate-free formulation of a main observation in \cite{Pinzari1}. Without specifying the canonical coordinates used, the dimension of the space  and the sign of $m_{2}$ in the announcement we bring a small generalization.

\begin{prop}\label{prop: 1}
$D$ is a first integral of the partially-averaged system $E_{Eucl, a}$:
$$\{E_{Eucl, a}, D\}=0.$$
\end{prop}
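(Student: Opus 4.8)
The plan is to deduce Proposition \ref{prop: 1} from the exact identity \eqref{eq: Poisson 1}, namely $\{E_{Kep}+V_{Eucl,2}, D+K\}=0$, by expanding both Poisson brackets into their constituent pieces and then averaging term by term over the Keplerian circle action. Concretely, I would expand
$$\{E_{Kep}, D\} + \{E_{Kep}, K\} + \{V_{Eucl,2}, D\} + \{V_{Eucl,2}, K\} = 0.$$
The first term vanishes identically because $D$ was already noted to be a first integral of $E_{Kep}$ (this is stated right after \eqref{eq: planar spherical energies}, and ultimately comes from the fact that the full spherical energy $E_{sph}$ Poisson-commutes with $E_{Eucl}$; setting $m_2=0$ one extracts $\{E_{Kep},D\}=0$). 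So what remains to be controlled is $\{E_{Kep}, K\} + \{V_{Eucl,2}, D\} + \{V_{Eucl,2}, K\}$, and the goal is to average this relation over $\tilde\ell$.

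The key organizing principle is the $m_2$-grading. Both $V_{Eucl,2}$ and $K$ are linear in $m_2$ (explicitly noted in the excerpt), while $E_{Kep}$ and $D$ are independent of $m_2$. Hence $\{E_{Kep},K\}$ and $\{V_{Eucl,2},D\}$ are of order $m_2$, whereas $\{V_{Eucl,2},K\}$ is of order $m_2^2$. Since \eqref{eq: Poisson 1} holds identically in $m_2$, each homogeneous component in $m_2$ must vanish separately; in particular the order-$m_2$ part gives the crucial identity
$$\{E_{Kep}, K\} + \{V_{Eucl,2}, D\} = 0.$$
Now I average this over a period of $\tilde\ell$. In the abstract canonical coordinates $(L,\tilde\ell,u_i,v_i)$ of Lemma \ref{lem: 2.1}, $E_{Kep}=E_{Kep}(L)$ only, so by Lemma \ref{lem: 2.2} the term $\{E_{Kep},K\}=E_{Kep}'(L)\,\tfrac{d}{d\tilde\ell}K$ has zero average. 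Therefore $\int_{S^1}\{V_{Eucl,2}, D\}\,ds = 0$. Finally, since $D$ is a function of the Keplerian orbit alone — i.e. $D$ depends only on $(L,u_i,v_i)$ and not on $\tilde\ell$, being a first integral of the $E_{Kep}$-flow — the averaging and the Poisson bracket with $D$ commute: $\int_{S^1}\{V_{Eucl,2},D\}\,ds = \{\int_{S^1}V_{Eucl,2}\,ds,\,D\}$. Combining, $\{\int_{S^1}V_{Eucl,2}\,ds, D\}=0$, and adding $\{E_{Kep},D\}=0$ gives $\{E_{Eucl,a},D\}=0$, which is the claim.

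The main obstacle — and the step deserving the most care — is justifying that $\int_{S^1}\{V_{Eucl,2},D\}\,ds = \{\int_{S^1}V_{Eucl,2}\,ds,\,D\}$, i.e. that one may pull the Keplerian average through the Poisson bracket with $D$. This is exactly where one uses that $D$ is $\tilde\ell$-independent: writing the bracket in the coordinates of Lemma \ref{lem: 2.1}, $\{V_{Eucl,2},D\}$ involves $\partial_{\tilde\ell}V_{Eucl,2}$ paired against $\partial_L D = 0$ and $\partial_L V_{Eucl,2}$ paired against $\partial_{\tilde\ell}D = 0$, so those cross terms with $\tilde\ell$-derivatives drop out and only the $(u_i,v_i)$-derivatives survive; for those, differentiation with respect to the secular variables commutes with integration over $\tilde\ell$ (the domain of integration being fixed), which is the elementary exchange of $\partial$ and $\int$. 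A secondary subtlety, which I would address with a brief remark rather than a full argument, is smoothness and the legitimacy of differentiating under the integral sign on $\mathcal R\setminus\mathcal I$, where $V_{Eucl,2}$ is smooth precisely because collisional orbits and orbits through the second center have been excised. One should also remark that although the computation of \eqref{eq: planar spherical energies}–\eqref{eq: Poisson 1} was carried out for $d=2$, rotation invariance around the axis of the centers promotes these identities verbatim to all $d\ge 2$, so the proof is dimension-free as asserted.
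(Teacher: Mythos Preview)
Your argument is correct and in fact more direct than the paper's. The paper does not expand \eqref{eq: Poisson 1} term by term; instead it invokes one step of averaging theory to produce a canonical transformation $\phi$, of order $O(m_{2})$, conjugating $E_{Eucl}$ to $E_{Eucl,a}$ modulo $O(m_{2}^{2})$, transports the relation $\{E_{Eucl},D+K\}=0$ through $\phi$, and then kills the $O(m_{2})$ remainder by observing (via Lemma~\ref{lem: 2.2}) that its only order-$m_{2}$ contribution, namely $\{E_{Kep}, D\circ\phi+K\circ\phi\}$, has zero $\tilde\ell$-average while $\{E_{Eucl,a},D\}$ is $\tilde\ell$-independent. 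Your route avoids constructing $\phi$ altogether: you use the $m_{2}$-homogeneity of \eqref{eq: Poisson 1} to isolate the order-$m_{2}$ identity $\{E_{Kep},K\}+\{V_{Eucl,2},D\}=0$ and then average directly, exploiting that the bracket with a Keplerian first integral commutes with Keplerian averaging. This is shorter and makes the mechanism more transparent; the paper's version has the advantage of being phrased in the standard language of normal-form perturbation theory, which is what is reused verbatim in the later propositions.

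One slip to fix: in your justification that $\int_{S^{1}}\{V_{Eucl,2},D\}\,ds=\{\int_{S^{1}}V_{Eucl,2}\,ds,D\}$, you write ``$\partial_{L}D=0$''. This is false in general: $D=C^{2}-2hA_{1}$ depends on the semi-major axis and hence on $L$. What you actually need, and what is true, is only $\partial_{\tilde\ell}D=0$. In the canonical coordinates of Lemma~\ref{lem: 2.1} the $(L,\tilde\ell)$-part of the bracket is $\partial_{L}V_{Eucl,2}\,\partial_{\tilde\ell}D-\partial_{\tilde\ell}V_{Eucl,2}\,\partial_{L}D$; the first term vanishes because $\partial_{\tilde\ell}D=0$, and the second term has zero $\tilde\ell$-average because $\partial_{L}D$ is $\tilde\ell$-independent while $\int_{0}^{2\pi}\partial_{\tilde\ell}V_{Eucl,2}\,d\tilde\ell=0$. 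With this correction the exchange of averaging and $\{\,\cdot\,,D\}$ is justified and your proof is complete.
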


\begin{proof}
We cover  $M$ by open submanifolds on which the above-mentioned canonical coordinates are defined. Let $\tilde{M}$ be any of these submanifolds, with local canonical coordinates $(L, \tilde{\ell}, u_{1}, \cdots, u_{d-1}, v_{1}, \cdots v_{d-1})$ as described in Lem \ref{lem: 2.1}. 

In any set of these coordinates, we may write 
$$E_{Eucl, a}=E_{Kep} +\langle V_{Eucl,2} \rangle_{\tilde{\ell}}:=E_{Kep}+\dfrac{1}{2 \pi}\int_{0}^{2 \pi} V_{Eucl,2} d \tilde{\ell}.$$

Then 
$$E_{Eucl}=E_{Eucl, a} + V_{Eucl,2}-\langle V_{Eucl,2}\rangle_{\tilde{\ell}}.$$

We apply averaging theory. There is only one frequency that we need to average. Moreover we may use $m_{2}$ to play the role of a small parameter. In this setting, we know that with one step of averaging we get a canonical transformation $\phi: \tilde{M} \mapsto \tilde{M}$ of order $O(m_{2})$ which conjugate $E_{Eucl}$ to $E_{Eucl, a}$ up to an error term of order $O(m_{2}^{2})$, that is to say, 
$$E_{Eucl, a} \dot{=} E_{Eucl} \circ \phi.$$
in which $\dot{=}$ denotes equal up to a term of order  $O(m_{2}^{2})$.
And
$$\{E_{Eucl, a}, D \circ \phi+K \circ \phi\} \dot{=}0.$$

Write 
$D \circ \phi+K \circ \phi=D + D \circ \phi-D+K \circ \phi$ and notice that 
$$ D \circ \phi-D+K \circ \phi=O(m_{2}),$$ since $\phi$ is $O(m_{2})$ close to identity. Also, $K$ is of order $m_{2}$. This means that 
$$D \circ \phi+K \circ \phi=D + O(m_{2}).$$
Thus
$$\{E_{Eucl, a}, D \circ \phi+K \circ \phi\}\dot{=}0=\{E_{Eucl, a}, D\}+O(m_{2}).$$
The expression $\{E_{Eucl, a}, D\}$ is independent of $\tilde{\ell}_{1}$, as well as its term of order $m_{2}$. The contribution of order $m_{2}$ in the remainder term $O(m_{2})$ comes from $\{E_{Kep}, D \circ \phi+K \circ \phi\}$, which has zero-average while integrated with respect to $\tilde{\ell}_{1}$ over $(0, 2 \pi)$ according to Lem \ref{lem: 2.2}. So by integration we obtain
$$\{E_{Eucl, a}, D\}\dot{=}0.$$
and thus 
$$\{E_{Eucl, a}, D \}=0,$$

Moreover, this equality is independent of the choice of $\tilde{M}$. Therefore it holds on all $M$. This ends the proof.
\end{proof}

Note that the proof does not rely on that the system is defined in $\R^{d}$ and can therefore be generalized to many other cases. 

\subsection{The Spherical Case}

We now consider the system on the sphere $\mathbb{S}^{d}$. The Kepler problem on the sphere has the nice and highly specific property that \emph{all} non-singular orbits are closed. Indeed these are all spherical ellipses. This allows us to average along all non-singular orbits of the spherical Kepler problem.

Again we restrict ourselves to the case $d=2$. To avoid complexification with coordinate changes we put the reference Kepler center at $(0, \dfrac{a}{\sqrt{1+a^{2}}}, -\dfrac{1}{\sqrt{1+a^{2}}})$, corresponding to the point $(0, a)$ in the chart.

We set 
$$E_{sKep}:=\dfrac{1}{2} \Bigl(\dot{x}^{2}+\dot{y}^{2})+\dfrac{1}{2} (x \dot{y}-\dot{x} y)^{2}-\dfrac{m_{1}\sqrt{1+a^{2}} (a y+1)}{\sqrt{(y-a)^{2}+(1+a^{2}) x^{2}}}$$
and write 
$$E_{sph}=E_{sKep}+V_{sph, 2}.$$
By \eqref{eq: spherical energy}, the term $V_{sph, 2}$ is linear in $m_{2}$. Note that by analyticity, all the terms and this relationship are well-defined on the whole sphere outside the four singular points of the individual spherical Kepler potentials, even though we have only deduced this formula in a chart for a hemisphere (see related discussions in \cite{TakeuchiZhao1}). This argument applies as well to what follows.

It follows from \eqref{eq: planar spherical energies} and \eqref{eq: commutativity of energies}, that the analogue of \eqref{eq: Poisson 1} holds in the spherical case: 

\begin{equation}\label{eq: Poisson 2}
\{E_{sKep}+V_{sph, 2}, D+K\}=0,
\end{equation}
in which the bracket is the Poisson bracket on $T \mathbb{S}^{2}$ transported from $T^{*} \mathbb{S}^{2}$ via Legendre transformation. Note that the same is true for $\mathbb{S}^{d}$ for any $d=1, 2, 3, \cdots$.

The subset in $T \mathbb{S}^{d}$ such that the spherical Kepler orbits are all non-singular is a dense open set in $T \mathbb{S}^{d}$. We denote by $\mathcal{I}_{Sph}$ by the set of orbits of $E_{sKep}$ in $T \mathbb{S}^{d}$ whose projections to $\mathbb{S}^{d}$ pass through other Kepler centers. We set $\mathcal{R}_{sph}=T \mathbb{S}^{d} \setminus \mathcal{I}_{Sph}$.

\begin{defi} The partially-averaged system is given on the averaging region $\mathcal{R}_{sph}$ by 
$$E_{sph, a}=E_{sKep}+\int_{S^{1}} V_{sph,2} \, d s, $$
in which the integration is made over closed orbits of $E_{sKep}$.
\end{defi}

Now with exactly the same proof to Prop. \ref{prop: 1}, with the Euclidean quantities replaced by their spherical counterparts, we get

\begin{prop}\label{prop: 2}
$D$ is a first integral of the partially-averaged system $E_{sph, a}$ on $\mathcal{R}_{sph}$:
$$\{E_{sph, a}, D\}=0.$$
\end{prop}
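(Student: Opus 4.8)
The plan is to mimic exactly the argument already carried out in the proof of Proposition \ref{prop: 1}, with every Euclidean object replaced by its spherical analogue, and to point out which facts established earlier make the transfer legitimate. The structural ingredients are: (i) the splitting $E_{sph}=(1+a^{2})(E_{sKep}+V_{sph,2}+(D+K)/2)$, which is the spherical incarnation of \eqref{eq: planar spherical energies}; (ii) the commutativity $\{E_{Eucl},E_{sph}\}=0$ transported to $T\mathbb{S}^{2}$, which gives the Poisson relation \eqref{eq: Poisson 2}, namely $\{E_{sKep}+V_{sph,2},\,D+K\}=0$; (iii) the existence, near any closed spherical Kepler orbit in $\mathcal{R}_{sph}$, of abstract canonical coordinates $(L,\tilde{\ell},u_{1},\dots,u_{d-1},v_{1},\dots,v_{d-1})$ of the type furnished by Lemma \ref{lem: 2.1}; and (iv) Lemma \ref{lem: 2.2}, the zero-average statement for brackets of the form $\{G_{1}(L),G_{2}(L,\tilde{\ell},u,v)\}$.

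First I would fix a submanifold $\tilde{M}\subset\mathcal{R}_{sph}$ on which such coordinates exist, write $E_{sph,a}=E_{sKep}+\langle V_{sph,2}\rangle_{\tilde{\ell}}$, and hence $E_{sph}=E_{sph,a}+V_{sph,2}-\langle V_{sph,2}\rangle_{\tilde{\ell}}$. Then I would run one step of averaging, using $m_{2}$ as the small parameter (legitimate because $V_{sph,2}$ is linear in $m_{2}$, as already noted, and $E_{sKep}$ generates a Hamiltonian circle action on $\tilde{M}$ since all non-singular spherical Kepler orbits are closed spherical ellipses): there is a canonical transformation $\phi$, $O(m_{2})$-close to the identity, with $E_{sph,a}\,\dot{=}\,E_{sph}\circ\phi$ up to $O(m_{2}^{2})$. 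Feeding this into \eqref{eq: Poisson 2} gives $\{E_{sph,a},D\circ\phi+K\circ\phi\}\,\dot{=}\,0$; since $\phi=\mathrm{id}+O(m_{2})$ and $K=O(m_{2})$, one has $D\circ\phi+K\circ\phi=D+O(m_{2})$, so $\{E_{sph,a},D\}=-\{E_{sph,a},\,D\circ\phi+K\circ\phi-D\}+O(m_{2}^{2})$, i.e. $\{E_{sph,a},D\}=O(m_{2})$. The $O(m_{2})$ term is $\tilde{\ell}$-independent on the left, so it equals its own $\tilde{\ell}$-average; but the $O(m_{2})$ contribution on the right comes from $\{E_{sKep},D\circ\phi+K\circ\phi\}$, which has zero $\tilde{\ell}$-average by Lemma \ref{lem: 2.2} (here $E_{sKep}$ depends on $L$ only). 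Hence $\{E_{sph,a},D\}\,\dot{=}\,0$, and since this holds for the full one-parameter family in $m_{2}$ the remainder must vanish identically, giving $\{E_{sph,a},D\}=0$ on $\tilde{M}$. Finally, the identity is coordinate-independent and the $\tilde{M}$ cover $\mathcal{R}_{sph}$, so it holds on all of $\mathcal{R}_{sph}$.

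The one point that genuinely needs care — and which I expect to be the main obstacle in a fully rigorous write-up — is the analytic continuation justifying that the splitting $E_{sph}=(1+a^{2})(E_{sKep}+V_{sph,2}+(D+K)/2)$ and the relation \eqref{eq: Poisson 2} are valid on the whole sphere (outside the four Kepler singularities), not merely on the hemisphere chart where \eqref{eq: spherical energy} was derived; this is asserted in the excerpt by analyticity and by reference to \cite{TakeuchiZhao1}, so I would simply invoke that. A secondary subtlety is that Lemma \ref{lem: 2.1} was phrased for closed \emph{Euclidean} Kepler orbits in $\mathcal{R}$, whereas here we need it for closed spherical Kepler orbits; but its proof used only that $E_{sKep}$ generates a free Hamiltonian circle action locally, plus Darboux's theorem and uniqueness of the moment map up to a constant, none of which is special to the Euclidean setting, so the same argument applies verbatim. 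With these two remarks in place the proof of Proposition \ref{prop: 1} transfers word for word, which is exactly what the statement preceding Proposition \ref{prop: 2} already promises.
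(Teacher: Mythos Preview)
Your proposal is correct and follows exactly the approach the paper takes: its proof of Proposition~\ref{prop: 2} is literally a pointer to the proof of Proposition~\ref{prop: 1} with Euclidean quantities replaced by their spherical counterparts, and the two subtleties you flag (analytic continuation beyond the hemisphere chart, and the spherical analogue of Lemma~\ref{lem: 2.1}) are precisely the remarks the paper makes around the statement. One small slip: the identity you label (i) should read $E_{sph}=(1+a^{2})(E_{Kep}+V_{Eucl,2}+(D+K)/2)$ with the \emph{Euclidean} pieces on the right, not $E_{sKep}+V_{sph,2}$; since you only ever use its consequence \eqref{eq: Poisson 2}, this does not affect the argument.
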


Note that on the sphere, the existence of abstract canonical coordinates can be shown in the same way as Lem \ref{lem: 2.1}, in which no Euclidean assumption has been made.

\section{Averaging in the Lagrange Problem}

We now consider the Lagrange problem in $\R^{2}$. The result and argument extends easily to other cases as well.  In $\R^{2}$,  two Kepler centers are put at $(0, a)$ and $(0, -a)$ in $\R^{2}$, with masses $m_{1}$ and $m_{2}$ respectively. In addition, a Hooke center is put at $(0,0)$, with a Hooke factor $f$. The metric on $\R^{2}$ is $\|\cdot\|_{a}$, defined as  $\|(u, v)\|_{a}=\sqrt{u^{2}+\frac{v^{2}}{1+a^{2}}}$.

The energy of this system is
\begin{equation}\label{eq: Euclidean energy LP}
\tilde{E}_{Eucl}=\dfrac{1}{2} \Bigl(\dot{x}^{2}+\dfrac{\dot{y}^{2}}{1+a^{2}}\Bigr)-\dfrac{m_{1}}{\sqrt{x^{2}+\frac{(y-a)^{2}}{1+a^{2}}}}-\dfrac{m_{2}}{\sqrt{x^{2}+\frac{(y+a)^{2}}{1+a^{2}}}}+f (x^{2}+\frac{y^{2}}{1+a^{2}}).
\end{equation}

As before we write

$$\tilde{E}_{Eucl}=E_{Kep} + \tilde{V}_{Eucl, 2},$$
with now 
$$\tilde{V}_{Eucl, 2}:=-\dfrac{m_{2}}{\sqrt{x^{2}+\frac{(y+a)^{2}}{1+a^{2}}}}+f \Bigl(x^{2}+\frac{y^{2}}{1+a^{2}}\Bigr).$$

According to Prop \ref{prop: 2.1} this system has a spherical corresponding system, which gives rise to an additional first integral deduced from the spherical energy, to which we need to add the term $f (x^{2}+y^{2})$ from the spherical harmonic potential. Comparing to \eqref{eq: planar spherical energies} only a term linear in $f$ has been added, so we are content to write

\begin{equation}
\tilde{E}_{sph}=(1+a^{2})(E_{Kep}+\tilde{V}_{Eucl, 2}+(D+\tilde{K})/2).
\end{equation}
in which now $\tilde{K}$ is a linear combination of terms linear in $m_{2}$ and $f$ respectively.

We assume as before $m_{1}>0$, so that $E_{Kep}$ has non-trivial (closed) orbits of negative energy. As in Def. \ref{defi: Euclidean average} we now define

\begin{defi} The partially-averaged system is given on $\mathcal{R}$  by 
$$\tilde{E}_{Eucl, a}=E_{Kep}+\int_{S^{1}} \tilde{V}_{Eucl, 2} \, d s, $$
in which the averaging is made with respect to the closed orbits of $E_{Kep}$.
\end{defi}

\begin{prop}\label{prop: 3}
$D$ is a first integral of the partially-averaged system $\tilde{E}_{Eucl, a}$ of the Lagrange problem:
$$\{\tilde{E}_{Eucl, a}, D\}=0.$$
\end{prop}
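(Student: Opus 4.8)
The plan is to reuse the proof of Proposition \ref{prop: 1} verbatim, with $E_{Eucl}$ replaced by $\tilde E_{Eucl}$, $V_{Eucl,2}$ by $\tilde V_{Eucl,2}$, and $K$ by $\tilde K$. The only new ingredient is that $\tilde V_{Eucl,2}$ and $\tilde K$ now contain a term linear in $f$ as well as one linear in $m_2$; the structural features that made the earlier argument work are, however, untouched. First I would record, from Proposition \ref{prop: 2.1} with nonzero Hooke parameter, the commutation relation $\{\tilde E_{Eucl},\tilde E_{sph}\}=0$ on the tangent bundle (with the Legendre-transported Poisson bracket), and then extract from the displayed formula $\tilde E_{sph}=(1+a^2)(E_{Kep}+\tilde V_{Eucl,2}+(D+\tilde K)/2)$ the identity $\{E_{Kep}+\tilde V_{Eucl,2},\,D+\tilde K\}=0$, exactly the analogue of \eqref{eq: Poisson 1}. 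Crucially $D$ here is the same function as in Proposition \ref{prop: 1}, unchanged by the addition of the Hooke center, so it remains a first integral of $E_{Kep}$.

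Next I would introduce, near any closed Keplerian orbit in $\mathcal R$, the abstract canonical coordinates $(L,\tilde\ell,u_1,\dots,u_{d-1},v_1,\dots,v_{d-1})$ furnished by Lemma \ref{lem: 2.1}, and write $\tilde E_{Eucl,a}=E_{Kep}+\langle\tilde V_{Eucl,2}\rangle_{\tilde\ell}$ and $\tilde E_{Eucl}=\tilde E_{Eucl,a}+\tilde V_{Eucl,2}-\langle\tilde V_{Eucl,2}\rangle_{\tilde\ell}$. A single step of averaging theory — now using the pair $(m_2,f)$ jointly as the small parameter, since $\tilde V_{Eucl,2}-\langle\tilde V_{Eucl,2}\rangle_{\tilde\ell}$ is linear in $(m_2,f)$ — yields a canonical transformation $\phi$, $O(m_2,f)$-close to the identity, with $\tilde E_{Eucl,a}\dot=\tilde E_{Eucl}\circ\phi$ up to a remainder of second order in $(m_2,f)$. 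Pulling back the relation $\{E_{Kep}+\tilde V_{Eucl,2},\,D+\tilde K\}=0$ through $\phi$ and using that $D\circ\phi-D+\tilde K\circ\phi=O(m_2,f)$ (because $\phi$ is $O(m_2,f)$-close to identity and $\tilde K$ is itself of order $(m_2,f)$), one gets $\{\tilde E_{Eucl,a},D\}=O(m_2,f)$ up to second order; the first-order part is the $\tilde\ell$-average of $\{E_{Kep},D\circ\phi+\tilde K\circ\phi\}$, which vanishes by Lemma \ref{lem: 2.2}. Since $\{\tilde E_{Eucl,a},D\}$ is $\tilde\ell$-independent, integrating in $\tilde\ell$ forces its first-order part to vanish, hence $\{\tilde E_{Eucl,a},D\}=0$ on $\tilde M$, and by arbitrariness of the chart on all of $\mathcal R$.

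I expect no genuine obstacle: the argument is word-for-word that of Proposition \ref{prop: 1}, and the remark following that proof already flags that it "does not rely on that the system is defined in $\R^d$." The one point deserving a sentence of care is that the small-parameter bookkeeping must be done with the two-dimensional parameter $(m_2,f)$ rather than $m_2$ alone — one should check that the perturbing term $\tilde V_{Eucl,2}-\langle\tilde V_{Eucl,2}\rangle_{\tilde\ell}$, the generating function of $\phi$, and $\tilde K$ are all $O(|m_2|+|f|)$, and that the "order $O(m_2^2)$" error estimates of averaging theory become $O((|m_2|+|f|)^2)$. This is routine since everything depends linearly on $(m_2,f)$. Alternatively, and perhaps more cleanly, one can keep $f$ fixed and of arbitrary size and treat only $m_2$ as small: then $E_{Kep}$ is replaced by the Kepler–Hooke Hamiltonian $E_{Kep}+f(x^2+y^2/(1+a^2))$, whose negative-energy orbits are still closed (all bounded Kepler–Hooke orbits being periodic is the content of Bertrand's theorem), $D$ must correspondingly be shown to Poisson-commute with this Kepler–Hooke Hamiltonian, and the averaging is then a genuine one-parameter problem in $m_2$; I would mention this variant but carry out the first version, which stays closest to the already-written proof.
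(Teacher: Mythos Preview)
Your main argument is correct and essentially coincides with the paper's proof: the paper also reruns the proof of Proposition \ref{prop: 1}, carrying out single-frequency averaging along closed Keplerian orbits and invoking Lemma \ref{lem: 2.2} to kill the first-order contributions in both $m_2$ and $f$. The only cosmetic difference is that the paper phrases this as averaging ``twice'' (once for the $m_2$-dependent oscillating part, once for the $f$-dependent one), whereas you package $(m_2,f)$ as a single small parameter; since $\tilde V_{Eucl,2}$ and $\tilde K$ are linear in $(m_2,f)$, the two formulations are equivalent.

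One correction to your side remark: the alternative variant you sketch is not viable as stated. Bertrand's theorem says that among central potentials the Kepler and Hooke potentials are the \emph{only} ones with all bounded orbits closed; it does \emph{not} say that a superposition of a Kepler and a Hooke potential has closed orbits. In fact the Kepler--Hooke system is generically not properly degenerate, so you could neither average along its orbits nor expect $D$ to Poisson-commute with it. Since you rightly opt for the first version anyway, this does not affect your proof, but the aside should be dropped.
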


\begin{proof} We follow the same proof as that of Prop. \ref{prop: 1}, but now proceed with single frequency averaging along closed Keplerian orbits twice. The first eliminate oscillating part of $m_{2}$-dependent terms
and the second eliminate oscillating part of $f$-dependent terms. We obtain as the proof of  Prop. \ref{prop: 1}
$$\{\tilde{E}_{Eucl, a}, D\}+O(m_{2}, f) \dot{=}0. $$
As in previous case, by Lem \ref{lem: 2.2} the contributions of order $m_{2}$ and of order $f$ from the $O(m_{2}, f)$-term has zero $\tilde{\ell}_{1}$-average, while $\{\tilde{E}_{Eucl, a}, D\}$ is independent of $\tilde{\ell}_{1}$. So we conclude as before that
$$\{\tilde{E}_{Eucl, a}, D\}=0.$$

\end{proof}

As in the two-center case, projecting the planar system to the hemisphere we get an analogue of Prop \ref{prop: 3} for Lagrange problem defined on a hemisphere. Call $\mathcal{R}_{s}$ the region such that the spherical Kepler orbits associated to $m_{1}$ are non-singular, are contained in this hemisphere and do not pass through the other Kepler center in this hemisphere.  On $\mathcal{R}_{s}$ we define the partially-averaged system $\tilde{E}_{sph, a}$ in an analogous way. This restriction is necessary as the Hooke potential becomes singular at the equator. Then with the same argument we have

\begin{prop}\label{prop: 4}
$D$ is a first integral of the partially-averaged system $\tilde{E}_{sph, a}$ on $\mathcal{R}_{sph}$ of the hemispherical Lagrange problem:
$$\{\tilde{E}_{sph, a}, D\}=0.$$
\end{prop}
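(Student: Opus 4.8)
The plan is to mirror the proof of Proposition~\ref{prop: 3} verbatim, simply transporting every Euclidean quantity to its hemispherical counterpart, exactly as Proposition~\ref{prop: 2} was obtained from Proposition~\ref{prop: 1}. First I would record the starting point: by Proposition~\ref{prop: 2.1} the hemispherical Lagrange problem with parameters $(m_1\sqrt{1+a^2}, m_2\sqrt{1+a^2}, f)$ is in projective correspondence with the Euclidean Lagrange problem, so the Euclidean energy $\tilde{E}_{Eucl}$ is a first integral of the hemispherical energy $\tilde{E}_{sph}$ and vice versa; by Legendre transformation and the transported Poisson bracket this reads $\{\tilde{E}_{sph}, \tilde{E}_{Eucl}\}=0$ on $T\mathbb{S}^2_{SH}$ (restricted to $\mathcal{R}_s$, where everything is regular). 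From the relation $\tilde{E}_{sph}=(1+a^2)(E_{Kep}+\tilde{V}_{Eucl,2}+(D+\tilde{K})/2)$ this gives the analogue of \eqref{eq: Poisson 2}, namely $\{E_{sKep}+\tilde{V}_{sph,2}, D+\tilde{K}\}=0$, where $\tilde{V}_{sph,2}$ and $\tilde{K}$ are again linear combinations of terms linear in $m_2$ and $f$.

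Next I would invoke Lemma~\ref{lem: 2.1} (valid on the sphere, as remarked after Proposition~\ref{prop: 2}, since its proof uses no Euclidean structure) to get abstract local canonical coordinates $(L,\tilde{\ell},u_1,\dots,u_{d-1},v_1,\dots,v_{d-1})$ near each closed spherical Kepler orbit of $E_{sKep}$ inside $\mathcal{R}_s$, with $E_{sKep}$ depending only on $L$ and the $(u_i,v_i)$ secular. On each chart I would carry out two successive single-frequency averaging steps along the closed spherical Kepler orbits — the first normalizing away the oscillating part of the $m_2$-dependent terms, the second that of the $f$-dependent terms — producing a near-identity canonical transformation $\phi$, of order $O(m_2,f)$, conjugating $\tilde{E}_{sph}$ to $\tilde{E}_{sph,a}$ up to $O(m_2^2, m_2 f, f^2)$. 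Since $\phi$ is $O(m_2,f)$-close to the identity and $\tilde{K}$ is itself $O(m_2,f)$, we get $D\circ\phi+\tilde{K}\circ\phi = D + O(m_2,f)$, hence $\{\tilde{E}_{sph,a}, D\} + O(m_2,f)\,\dot{=}\,0$. The contributions of order $m_2$ and of order $f$ in the remainder arise from $\{E_{sKep}, D\circ\phi+\tilde{K}\circ\phi\}$, which has zero $\tilde{\ell}$-average by Lemma~\ref{lem: 2.2} (the lemma only needs $E_{sKep}$ to depend on $L$ alone), whereas $\{\tilde{E}_{sph,a}, D\}$ is independent of $\tilde{\ell}$; integrating over a period in $\tilde{\ell}$ kills the remainder and yields $\{\tilde{E}_{sph,a}, D\} = 0$ on each chart, and since this is a pointwise identity it holds on all of $\mathcal{R}_s$.

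The one point requiring a little care — the main obstacle, such as it is — is the domain bookkeeping rather than any new computation: the spherical Hooke potential is singular at the equator, so one must genuinely work on the open region $\mathcal{R}_s$ where the relevant closed spherical Kepler orbits stay inside the open hemisphere and avoid the second Kepler center, and check that this region is invariant enough (open, and a union of closed Keplerian orbits) for the averaging construction of Lemma~\ref{lem: 2.1} and Proposition~\ref{prop: 1} to apply without change. Beyond this the argument is a transcription; I would close by remarking, as the paper does in analogous places, that the same proof goes through for $\mathbb{S}^d$ with any $d$ and, by the projective correspondence between $\mathbb{S}^d$ and $\mathbb{H}^d$ referenced via \cite{TakeuchiZhao1}, in the hyperbolic setting as well.
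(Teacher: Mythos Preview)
Your proposal is correct and follows exactly the route the paper intends: the paper's own proof of Proposition~\ref{prop: 4} consists of the single phrase ``with the same argument,'' meaning the proof of Proposition~\ref{prop: 3} transported to the hemisphere just as Proposition~\ref{prop: 2} was obtained from Proposition~\ref{prop: 1}. Your write-up spells this out faithfully, including the domain caveat about the equatorial singularity of the spherical Hooke potential that the paper also flags.
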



\section{The Systems in Hyperbolic Spaces}

As in Killing \cite{Killing} it is possible to define Kepler, two-center and Lagrange problems in the hyperbolic spaces in a similar fashion. The approach based on projective dynamics extends to this case as well. One way to realize the hyperbolic space is to write it as a sheet of the unit pseudosphere in $\R^{d, 1}$.  The Euclidean-hyperbolic correspondence is now again given via central projection from the origin in $\R^{d,1}$. Analogous of Kepler and Hooke potentials can be defined analogously as in the spherical case, by replacing cotangent and tangent by hyperbolic cotangent and hyperbolic tangent respectively. This way we obtain the same results also for partially averaged systems from two-center problem and Lagrange problem in a hyperbolic space: $D$ is always a first integral of the partially-averaged systems analogously defined. Here the partially-averaged system is again defined in the region such that the Keplerian orbits are closed.

 We refer to \cite{TakeuchiZhao1} for precise settings of these problems.

\section{Integrable billiards with the partially-averaged system}
 
 \subsection{Integrable Kepler billiards}

 Let $(M, g, V)$ be a natural mechanical system, in which $(M, g)$ is a Riemannian manifold and  $V: M \mapsto \R$ is the potential. Adding a codimension-1 submanifold $\mathcal{B}$ of $M$ to the data defines a system of mechanical billiards. In such a system, the particle moves under the influence of the potential $V$ and in addition reflected elastically while reaching $\mathcal{B}$. This defines a continuous, non-smooth dynamical systems. The total energy of the system is a preserved quantity of this system. As in the smooth case, the system is called integrable if it possesses $d:=\dim{M}$ preserved quantities which are independent and are in involution. In particular, if $\dim{M}=2$, then the system is integrable provided a conserved quantity in addition to the energy exists.

In the plane, now several families of mechanical billiards in a Kepler potential field have been identified. Similar results hold on the 2-dimensional sphere and in the hyperbolic plane as well. In its most general setting, we have
 
 \begin{theo}\label{thm: integrable Kepler billiards} (\cite{TakeuchiZhao1})
 In the plane, on the sphere, and in the hyperbolic plane, any finite combination of arcs from a fixed confocal family of conic sections and their degeneracies, with a Kepler center put at one of the focus, gives rise to an integrable Kepler billiard, with $D$ being the additional first integral for which $h$ is the focus-center distance.
 \end{theo}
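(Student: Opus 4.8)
We indicate a proof, following the standard mechanism for integrability of a planar mechanical billiard. Since $\dim M=2$, it is enough to exhibit a single first integral which is functionally independent of, and Poisson-commutes with, the energy $E$, and $D$ is the candidate. The plan is to split the verification into the smooth arcs and the reflections. Between two consecutive reflections the particle traces a genuine Kepler arc with focus at the Kepler center $F_1$; along such an arc the energy, the angular momentum $C$ about $F_1$, and the Laplace--Runge--Lenz vector $A$ about $F_1$ are all constant. (On $\mathbb{S}^2$ and $\mathbb{H}^2$ one uses instead the curved Kepler problem, which is classically known to be super-integrable, with a conserved angular momentum and a conserved eccentricity/Laplace-type vector.) Hence $D=C^2-2hA_1$ is automatically constant on each smooth piece, and $E$ is preserved at each impact by elasticity; so everything reduces to the single claim that $D$ is invariant under the elastic reflection off an arc of the confocal family. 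Once this is done, $D$ and $E$ furnish the required pair: they commute on the smooth part because $D$ is a first integral of the Kepler Hamiltonian, and they are generically independent.

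For the Euclidean case I would first rewrite $D$ in a form adapted to the reflection law. Placing $F_1$ at the origin and the second focus $F_2$ of the confocal family at distance $2h$ along the axis of foci, writing $\hat e$ for the unit vector from $F_1$ to $F_2$ and $\hat r_i$ for the unit vector from $F_i$ toward the particle, a one-line computation gives, with the orientation conventions fixing the sign of $A_1$,
\[
D = C_1 C_2 + 2h\, m_1\, (\hat r_1 \cdot \hat e),
\]
where $C_1=C$ is the angular momentum about $F_1$, $C_2$ the angular momentum about $F_2$, and $m_1$ the Kepler mass. At an impact point $P$ the position is frozen, so $r_i=|P-F_i|$, $\hat r_i$ and $\hat r_1\cdot\hat e$ are untouched, and only $C_1 C_2$ can change. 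Decomposing the incoming velocity as $v^-=v_\parallel\hat t+v_\perp\hat n$ in the tangent--normal frame at $P$, so that $v^+=v_\parallel\hat t-v_\perp\hat n$, one finds
\[
C_1^+ C_2^+ - C_1^- C_2^- = 2\, v_\parallel v_\perp\, r_1 r_2 \bigl[(\hat r_1\cdot\hat n)(\hat r_2\cdot\hat t) + (\hat r_1\cdot\hat t)(\hat r_2\cdot\hat n)\bigr],
\]
and the bracket vanishes by the optical property of confocal conics: at every point of any (possibly degenerate) conic of the family, one of $\hat t$, $\hat n$ is the interior and the other the exterior bisector of the angle between the focal directions $\hat r_1$ and $\hat r_2$, so $\hat r_1$ and $\hat r_2$ are mirror images of each other in both $\hat t$ and $\hat n$; since the symmetric bilinear form $(u,v)\mapsto(u\cdot\hat n)(v\cdot\hat t)+(u\cdot\hat t)(v\cdot\hat n)$ annihilates both $\hat t$ and $\hat n$, it annihilates the pair $\hat r_1,\hat r_2$ as well. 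Thus $C_1 C_2$, hence $D$, is preserved at every reflection, whatever arc of the family is used; and since $D$ depends on the confocal family only through $F_1$ and $h$, the same $D$ serves for every conic of the family, so arbitrary finite concatenations of confocal arcs and their degeneracies are allowed.

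For the sphere and the hyperbolic plane there are two routes, and I would present whichever is cleaner. The first is to run the identical argument intrinsically: confocal conics on surfaces of constant curvature have the same bisector/optical property (Chasles), $D$ is again a polynomial in the conserved curved Kepler integrals, and it admits an analogous decomposition in which the reflection affects only a product of ``angular momenta about the two foci,'' annihilated by the same bilinear-form identity. The second, more in keeping with this note, is to transport the planar result through the projective correspondence of Proposition \ref{prop: 2.1} and its hyperbolic analogue (see \cite{TakeuchiZhao1}): central projection sends unparametrised Kepler arcs to unparametrised Kepler arcs and makes the confocal conics correspond, so it remains only to check that it intertwines the two billiard reflection laws --- which is precisely where the affine-modified metric $\|\cdot\|_a$ on the Euclidean side enters. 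This reflection step --- establishing the optical property for the possibly degenerate confocal conics in each geometry and, in the projective route, verifying that the correspondence really does carry one reflection law to the other --- is the only non-routine part, and it is what I expect to be the main obstacle; the behaviour at corners of the wall, and at degenerate conics passing through the Kepler center $F_1$, are minor technical caveats to be dispatched by excising a null set or by regularisation.
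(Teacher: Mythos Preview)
The paper does not prove this theorem at all: it is quoted from \cite{TakeuchiZhao1} and stated without proof, so there is no ``paper's own proof'' to compare against. What can be compared is your argument against the one in the cited source.

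Your Euclidean argument is correct. The identity $D=C_{1}C_{2}+2h\,m_{1}(\hat r_{1}\cdot\hat e)$ holds, the computation of $C_{1}^{+}C_{2}^{+}-C_{1}^{-}C_{2}^{-}$ is right, and the bracket vanishes by the bisector property of confocal conics exactly as you say. This is essentially the classical two-center (Euler) viewpoint and is one of the standard routes to the result; it is in the spirit of \cite{GJ}, \cite{PZ} and the direct computation in \cite{TakeuchiZhao1}.

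For the sphere and the hyperbolic plane you only sketch. Of the two routes you propose, the second --- transporting the planar statement through the projective correspondence of Proposition~\ref{prop: 2.1} --- is precisely the method of \cite{TakeuchiZhao1}, and the non-trivial step is exactly the one you flag: checking that central projection intertwines the billiard reflection laws once the flat side carries the modified norm $\|\cdot\|_{a}$. That verification is carried out in \cite{TakeuchiZhao1}; your proposal correctly identifies where the work lies but does not perform it. The first, intrinsic route (curved optical property plus a curved analogue of $C_{1}C_{2}$) would also work but requires setting up the curved Laplace--Runge--Lenz and angular-momentum quantities explicitly, which you have not done. So as a proof the Euclidean part is complete and the curved part is an outline pointing at the right ingredients.
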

 
Likely these are all the integrable cases with the presence of at least one effective Kepler center \cite{BlasiBarutello, BBBT}.

In higher dimensions we have
 
  \begin{theo}\label{thm: integrable Kepler billiards higher dimension} (\cite{TakeuchiZhao2})
 In the Euclidean space, consider a family of rotational invariant quadrics around a fixed axis, such that the intersections of these quadrics with any plane passing through this fixed axis belongs to the same confocal family of conic sections. The foci of the intersection is the same for all the planes through the fixed axis and are referred to as the foci of the family. On the sphere and in the hyperbolic space, consider natural analytic extensions of images of these family of quadrics from the Euclidean space under central projection. The images of the foci of the family in the Euclidean space are called the foci of the family on the sphere and in the hyperbolic space. 
 
By further putting a Kepler center at one of the common foci of this family, and consider any finite combination of open subsets from these family, we obtain integrable Kepler billiards in the Euclidean space, on the sphere and in the hyperbolic space, in which $D$ is an additional first integral.
 \end{theo}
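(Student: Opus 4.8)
The plan is to reduce the spherical and hyperbolic cases to the Euclidean one, and then, for the Euclidean billiard in $\R^d$, to exhibit an involutive family of $d$ independent first integrals containing $D$; the only nontrivial point will be the invariance of $D$ under a single reflection. For the first reduction I would invoke the higher-dimensional projective correspondence of \cite{TakeuchiZhao1} extending Prop.~\ref{prop: 2.1}: central projection from the origin of $\R^{d+1}$ (resp.\ $\R^{d,1}$) intertwines the Euclidean Kepler billiard with the spherical (resp.\ hyperbolic) one whose walls are the projected quadrics, carrying unparametrized Keplerian arcs to unparametrized Keplerian arcs, reflections to reflections, and the axial rotational symmetry to the axial rotational symmetry. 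Since under this correspondence $D$ is, up to the affine coordinate change and a constant factor, a fixed linear combination of the Euclidean and of the ambient Keplerian energies --- each a first integral of both systems --- it is enough to treat the Euclidean case. (Alternatively one may run the argument below directly on $\mathbb{S}^d$ or $\mathbb{H}^d$ with spherical or hyperbolic trigonometry, using the two-dimensional statements of Theorem~\ref{thm: integrable Kepler billiards} as input.)

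Next, working in $\R^d$ with the axis along a unit vector $e$ and the Kepler center $Z_1$ on that axis: between reflections the trajectory is a Keplerian arc, so there the Keplerian energy $E$, the moment map of the residual $SO(d-1)$-symmetry of rotations fixing $e$, and $D=C^2-2hA_1$ are all conserved. As the involutive family I would take $E$, $D$, and the nested Casimirs $|C^{(2)}|^2,\dots,|C^{(d-1)}|^2$, where $C^{(k)}$ is the angular-momentum bivector of the coordinates $x_1,\dots,x_k$ in the hyperplane $e^{\perp}$; these $d-2$ functions are the classical Gelfand--Cetlin functions on $\mathfrak{so}(d-1)^{*}$, Poisson-commuting with one another and with every $SO(d-1)$-invariant function. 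Since $E$ and $D$ are both $SO(d-1)$-invariant ($C^2$ is $SO(d)$-invariant and $A_1$ is the Laplace--Runge--Lenz component along the fixed direction $e$), the whole family is in involution once $\{E,D\}=0$, which holds because $D$ is a polynomial in $C^2$ and the Laplace--Runge--Lenz components, all Keplerian first integrals; independence is generic. Finally each member must survive the reflections: energy automatically; and for $|C^{(k)}|^2$, at a reflection point $q$ the outgoing velocity differs from the incoming one by a multiple of the inward normal, which for a surface of revolution lies in the meridian $2$-plane $\Pi_q:=\mathrm{span}(q,e)$ and so has no component tangent to the $SO(d-1)$-orbit through $q$ (whose tangent space is $\Pi_q^{\perp}=e^{\perp}\cap q^{\perp}$); hence the $\mathfrak{so}(d-1)$-moment map, and each $|C^{(k)}|^2$, is unchanged.

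The main step is the invariance of $D$ under a reflection off one of the revolution quadrics $Q$. I would fix a reflection point $q$ off the axis and decompose $\dot q=u+w$ with $u\perp\Pi_q$, $w\in\Pi_q$. Since $Q$ is a surface of revolution, $T_qQ=\Pi_q^{\perp}\oplus\R t$ with $t$ the tangent at $q$ to the meridian section $Q\cap\Pi_q$, and the inward normal lies in $\Pi_q$; so the reflection fixes $u$ and acts on $w$ exactly as the two-dimensional Kepler-billiard reflection in $\Pi_q$ off $Q\cap\Pi_q$. Using $\dot q\cdot e=w\cdot e$, $q\cdot\dot q=q\cdot w$ and $|\dot q|^2=|u|^2+|w|^2$, a short computation gives the splitting
\[ D \;=\; |u|^2\bigl(|q|^2+2h\,(q\cdot e)\bigr)\;+\;D_{2D}(q,w), \]
where $D_{2D}(q,w)=\bigl(|q|^2|w|^2-(q\cdot w)^2\bigr)+2h\bigl(|w|^2(q\cdot e)-(q\cdot w)(w\cdot e)-m_1(q\cdot e)/|q|\bigr)$ is precisely the quantity $D$ of the two-dimensional Kepler problem in $\Pi_q$ with position $q$, velocity $w$, center $Z_1\in\Pi_q$, and axis of foci the $e$-direction in $\Pi_q$. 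The first summand is unchanged, since $q$ and $|u|^2$ are fixed by the reflection --- in fact $q\wedge e\wedge\dot q$ is itself a first integral of the full billiard, conserved along Keplerian arcs and under each reflection. For the second, the hypothesis on $Q$ makes $Q\cap\Pi_q$ a conic confocal to a fixed family in $\Pi_q$ with $Z_1$ at a focus and focus-center distance $h$, so the two-dimensional case of Theorem~\ref{thm: integrable Kepler billiards} gives that $D_{2D}$ is preserved by exactly that two-dimensional reflection. Hence $D$ is preserved.

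I expect the main obstacle to be the clean handling of the non-Euclidean cases: the meridian-plane reduction does go through over $\mathbb{S}^d$ and $\mathbb{H}^d$, but it is the verification that the projective correspondence of \cite{TakeuchiZhao1} genuinely intertwines the \emph{reflections} --- and carries $D$ to its spherical/hyperbolic counterpart --- that requires the careful bookkeeping, whereas in the Euclidean case the only real computation is the displayed identity together with the elementary geometry of normals to a surface of revolution.
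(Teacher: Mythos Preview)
The paper does not prove this theorem at all: it is quoted verbatim from \cite{TakeuchiZhao2} and stated without argument, serving only as input to the subsequent billiard construction. So there is no ``paper's own proof'' to compare your proposal against.

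That said, your sketch is a reasonable and essentially self-contained argument. The meridian-plane reduction is the natural mechanism here: the rotational invariance of the walls forces the normal at any off-axis point into $\Pi_q=\mathrm{span}(q,e)$, so the reflection acts as identity on the transverse velocity $u$ and as the planar billiard reflection on the in-plane velocity $w$; your splitting of $D$ into a $|u|^2$-term times a function of $q$ alone plus the planar quantity $D_{2D}$ then reduces invariance of $D$ to Theorem~\ref{thm: integrable Kepler billiards}. One cosmetic slip: with $D=C^2-2hA_1$ and $A_1=|\dot q|^2(q\cdot e)-(q\cdot\dot q)(\dot q\cdot e)-m_1(q\cdot e)/|q|$, the coefficient of $|u|^2$ is $|q|^2-2h(q\cdot e)$, not $|q|^2+2h(q\cdot e)$, and likewise $D_{2D}$ carries $-2h$ rather than $+2h$; this does not affect the argument since the term is still a function of $(q,|u|^2)$ only. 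Your choice of the Gelfand--Cetlin chain $|C^{(2)}|^2,\dots,|C^{(d-1)}|^2$ to fill out the remaining $d-2$ integrals is clean and their invariance under reflection follows exactly as you say from the normal lying in $\Pi_q$.

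The weakest link is the one you already flag: transporting reflections through the projective correspondence in the curved cases. If you want to avoid relying on \cite{TakeuchiZhao1} for that, the alternative you mention --- running the meridian-plane splitting directly on $\mathbb{S}^d$ or $\mathbb{H}^d$ and invoking the two-dimensional spherical/hyperbolic case of Theorem~\ref{thm: integrable Kepler billiards} --- is cleaner and parallels the Euclidean computation line by line; the totally geodesic meridian $2$-sphere (resp.\ hyperbolic plane) through $q$ and the axis plays the role of $\Pi_q$, and the normal to a surface of revolution again lies in it.
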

 
 \subsection{A set of billiard systems with the partially-averaged system}
We shall generalize the notion of integrable mechanical billiard slightly. The underlying mechanical system is now replaced by a smooth, autonomous Hamiltonian system defined on $T M$ (as previously equipped with a symplectic form from $T^{*} M$ via Legendre transformation). If the configuration space $M$ has dimension 2, such a system is called \emph{integrable} if it has two independent, commuting conserved quantities.
 
 We now define a set of billiard systems in this sense:
 
In the Euclidean space, on the sphere and in the hyperbolic space, consider any finite combination of open subsets from the family of quadrics defined as in Thm. \ref{thm: integrable Kepler billiards higher dimension}. Then, any finite combination of open subsets from this family as reflection wall, with the partially-averaged system as the underlying dynamical system, defines a billiard system. This system mixes the continuous secular evolution of orbits described by the partially-regularized systems with drastic changes of orbits due to reflections with the reflection wall. 

Combining Prop \ref{prop: 1} and its analogue versions on the sphere and in the hyperbolic space with Thms \ref{thm: integrable Kepler billiards}, \ref{thm: integrable Kepler billiards higher dimension}, we obtain

\begin{theo} The Keplerian energy $E_{Kep}$ and $D$ are conserved quantities for the billiard systems defined as above. They are therefore integrable. 
\end{theo}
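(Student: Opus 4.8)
The plan is to check that $E_{Kep}$ and $D$ are each preserved by the two constituents of the billiard motion --- the continuous flow of the partially-averaged Hamiltonian between consecutive impacts, and the elastic reflection at the wall --- and then to read off integrability. The geometric normalization that makes the statement coherent, and which I would fix at the outset, is to place the two Kepler centers of the underlying two-center (or Lagrange) problem exactly at the two foci of the confocal family of quadrics out of which the reflection wall is built. With that choice the focus-center distance that enters $D$ in Theorems \ref{thm: integrable Kepler billiards} and \ref{thm: integrable Kepler billiards higher dimension} equals half the inter-center distance, which is precisely the value of $h$ appearing in $D$ for the partially-averaged system \eqref{eq: D}, and the axis of foci is the direction from the reference Kepler center to the second one; so the two occurrences of $D$ are literally the same function on $TM$.

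For the continuous part there is essentially nothing new to do. Conservation of $D$ along the averaged flow is Proposition \ref{prop: 1} in the Euclidean two-center case, Proposition \ref{prop: 2} on the sphere, the analogous statement in the hyperbolic setting, and Propositions \ref{prop: 3} and \ref{prop: 4} when a Hooke center is present as well: in every ambient space one has $\{E_{Eucl,a},D\}=0$, with the obvious replacements of symbols. Conservation of $E_{Kep}$ is immediate from the construction of the average: $E_{Eucl,a}=E_{Kep}+\langle V_{Eucl,2}\rangle$ is invariant under the Keplerian circle action, so its Hamiltonian flow commutes with that action; equivalently, in the abstract canonical coordinates of Lemma \ref{lem: 2.1} one has $E_{Kep}=E_{Kep}(L)$ while $E_{Eucl,a}$ does not depend on $\tilde\ell$, whence $\{E_{Kep},E_{Eucl,a}\}=0$. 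Thus both functions are first integrals of the flow between impacts.

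For the reflection the key observation is that $E_{Kep}$ and $D$ are functions on $TM$ of position and velocity alone, while the elastic reflection at the wall is a map of $TM$ defined independently of whatever generates the motion between impacts. Elastic reflection fixes the base point and preserves the $g$-norm of the velocity, hence preserves $E_{Kep}=\frac{1}{2} g(\dot q,\dot q)+V_{Kep}(q)$. For $D$ I would invoke Theorems \ref{thm: integrable Kepler billiards} and \ref{thm: integrable Kepler billiards higher dimension}: with the Kepler center at a focus of the confocal family and $h$ the focus-center distance, $D$ is a first integral of the Kepler billiard off any finite combination of open subsets of that family. Since $D$ is already a first integral of $E_{Kep}$, this forces $D$ to be invariant under the reflection map of the wall itself --- a purely geometric property that does not see the continuous dynamics. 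Hence $D$, like $E_{Kep}$, is carried unchanged across the reflections of our billiard.

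Putting the two steps together, $E_{Kep}$ and $D$ are conserved quantities of the billiard system in each of the ambient spaces; they are in involution, $\{E_{Kep},D\}=0$, because $D$ is a first integral of $E_{Kep}$, and they are generically independent, being the same two functions whose independence underlies the integrability asserted in Theorems \ref{thm: integrable Kepler billiards} and \ref{thm: integrable Kepler billiards higher dimension}. When $\dim M=2$ this is exactly the notion of integrability adopted above, and the proof is complete. When $\dim M>2$ one adjoins the $\dim M-2$ commuting integrals coming from the rotational symmetry about the common axis of the quadrics: the second center lies on that axis, so $V_{Eucl,2}$, its average, and $E_{Eucl,a}$ are all invariant under rotations about the axis, while the walls are surfaces of revolution about the axis and so are carried to themselves by these rotations, so the reflection map commutes with them and the corresponding moment-map components are preserved. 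I expect that the only genuine work lies in this reflection step --- making the identification of the two normalizations of $h$ explicit, and, in dimension larger than $2$, verifying that the rotational integrals one adjoins are independent of and Poisson-commute with $E_{Kep}$ and $D$ on a dense open set; the continuous-flow half of the argument is already contained in the propositions quoted above.
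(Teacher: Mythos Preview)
Your proposal is correct and follows essentially the same approach as the paper: split the billiard motion into the continuous averaged flow and the elastic reflection, use the Propositions for $D$ and the $\tilde\ell$-independence of the averaged Hamiltonian for $E_{Kep}$ on the flow side, and use preservation of kinetic plus Kepler-potential energy together with Theorems~\ref{thm: integrable Kepler billiards}--\ref{thm: integrable Kepler billiards higher dimension} on the reflection side. In fact you are more explicit than the paper, which writes out only the $E_{Kep}$ argument and relegates the $D$ part to the ``combining'' clause preceding the theorem; your remarks on matching the parameter $h$, on deducing reflection-invariance of $D$ from its invariance under the full Kepler billiard, and on the extra rotational integrals in dimension $>2$ fill in details the paper leaves implicit.
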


\begin{proof} We show that $E_{Kep}$ is a conserved quantity. First due to averaging the Hamiltonian of any partially-averaged system does not depend on the Keplerian fast angle in any local action-angle coordinates and thus the semi major axis of the Keplerian orbit is invariant. As $E_{Kep}$ is only a function of this semi major axis it is also invariant. On the other hand, $E_{Kep}$ is the sum of the kinetic and potential energies of the particle in the pure Kepler system. As both the kinetic energy and the potential energy does not change at an elastic reflection, $E_{Kep}$ is also invariant under reflections at a reflection wall. 
\end{proof}

For example, this asserts that the system in the plane, in which a particle moves according to the flow of $E_{Eucl, a}$ until it hits a straight line and gets reflected at this line has two conserved quantities, namely the Kepler energy $E_{Kep}$ and $D$. This system combines the work of \cite{Pinzari1} and \cite{GJ}. The Hamiltonian $E_{Eucl, a}$ of the system though, is not necessarily preserved under the reflections at the straight line wall. 

We have thus identified a large family of integrable billiard systems with an underlying dynamics of mechanical nature on spaces of constant curvatures. 

\medskip
\medskip
{\bf Acknowledgements:} G. P. acknowledges the ERC project 677793 (2016--2022).
L. Z. is partially supported by the DFG Heisenberg Programme ZH 605/4-1 and the Fundamental Research Funds for the Central Universities of China.

\end{document}